 \newtheorem{thm}{Theorem}
 \newtheorem{lem}{Lemma}
 \newtheorem{rem}{Remark}
\newcommand{\z}{\mathbb{Z}}
\newcommand{\al}{\alpha}
\newcommand{\kk}{\mathbb{K}}
\newcommand{\R}{\mathbb{R}}
\newcommand{\C}{\mathbb{C}}
\begin{document}

\title{On the largest value of the solutions of Erd\H{o}s's last equation}

\author{Istv\'an~Pink}
\address{Institute of Mathematics, University of Debrecen,\newline
\indent P. O. Box 400, H-4002 Debrecen, Hungary}
\email{pinki@science.unideb.hu}

\author{Csaba~S\'andor}
\address{Department of Stochastics, Institute of Mathemetics, Budapest University of Technology and Economics, M\H{u}egyetem rkp. 3., H-1111 Budapest, Hungary; \newline \hspace*{4mm}
HUN-REN Alfr\'ed R\'enyi Institute of Mathematics, Re\'altanoda utca 13--15., H-1053 Budapest, Hungary
MTA--HUN-REN RI Lend\"ulet ``Momentum'' Arithmetic Combinatorics Research Group, Re\'altanoda utca 13--15., H-1053 Budapest, Hungary}
\email{sandor.csaba@ttk.bme.hu}

\thanks{The first author is supported by the NKFIH grant ANN 130909.
The second author is supported by the NKFIH Grants No. K129335, the
grant NKFI KKP144059 ''Fractal geometry and applications'' and the Lend\"ulet ''Momentum'' program of the Hungarian Academy of Sciences (MTA)}
\keywords{Erd\H{o}s's last equation, Baker's method, LLL algorithm}
\subjclass[2020]{Primary 11D45, Secondary 11D72}
\date{\today}

\maketitle

\begin{abstract}
Let $n$ be a positive integer. The Diophantine equation $n(x_1+x_2+\dots +x_n)=x_1x_2\dots x_n$, $1 \le x_1\le x_2\le \dots \le x_n$ is called Erd\H{o}s's last equation. We prove that $x_n\to \infty $ as $n\to \infty$ and determine all tuples $(n,x_1,\dots ,x_n)$ with $x_n\le 10$.
\end{abstract}

\section{Introduction}

On 19 August 1996, a month before his death, Paul Erd\H{o}s formulated the following problem in a letter written to Richard Guy (see \cite{Guy}):

"Do you know anything about the equation
\begin{equation}\label{ELE}
n(x_1+x_2+\dots +x_n)=x_1x_2\dots x_n
\end{equation}
$1 \le x_1\le x_2\le \dots \le x_n$, where the $x_i$ are positive integers?
Denote the number of solution in $n$ by $f(n)$; $f(n)>n^{\varepsilon }$ for some $\varepsilon >0$ seems to be true? ...
"

\medskip
The equation (\ref{ELE}) is called Erd\H{o}s last equation. It is easy to see that for $x_1=\dots =x_{n-2}=1$, equation (\ref{ELE}) is equivalent to $n(2n-2)=(x_{n-1}-n)(x_n-n)$, therefore $f(n)\ge \lceil \frac{1}{2}d(2n^2-2n) \rceil >0$, where $d(.)$ is the number of divisors function.

\medskip

A solution of equation (\ref{ELE}) is
$$
x_1=\dots =x_{n-2}=1, \ x_{n-1}=n+1, \ x_n=2n^2-n.
$$
Shiu \cite{Shiu} proved that in fact, for any solution of (\ref{ELE}) we have $x_n\le 2n^2-n$.
Denote by $g(n)$ the least possible value of $x_n$ for equation (\ref{ELE}).
The solution
$$
x_1=\dots =x_{n-2}=1, \ x_{n-1}=2n, \ x_n=3n-2
$$
implies that $g(n)\le 3n-2$.
For $n=m^2$, Shiu \cite{Shiu} gave the solution
$$
x_1=\dots x_{m^2-4}=1, \ x_{m^2-3}=x_{m^2-2}=x_{m^2-1}=m, \ x_{m^2}=m+4,
$$
which yields
\begin{equation} \label{shiu-result}
\displaystyle\liminf_{n\to \infty}\frac{g(n)}{\sqrt{n}}\le 1.
\end{equation}

\medskip

\noindent The aim of this paper is twofold. In one hand, we study the function $g(n)$ (the possible minimum value of $x_n$
in \eqref{ELE}) and derive two results concerning it (see Theorems \ref{thm1} and \ref{thm3}).
On the other hand, we deduce an effective finiteness result on equation \eqref{ELE} (see Theorem \ref{thm2}),
and by combining this with the LLL algorithm, we solve completely equation \eqref{ELE} for $3 \le x_n \le 10$
(see Theorem \ref{thm4}).

\medskip

\noindent Our first theorem %provides a quantitative lower bound for $g(n)$ and
improves the upper bound given in \eqref{shiu-result}.

\begin{thm}\label{thm1} Consider equation \eqref{ELE}. Then, we have $\displaystyle \liminf_{n\to \infty }\frac{g(n)}{\sqrt[3]{n}}\le 1$.

%\begin{enumerate}
%\item $\displaystyle \liminf_{n\to \infty}\frac{g(n)}{\frac{\log \log n \cdot \log \log \log n}{\log \log \log \log n}}\ge 1$,
%\item

%\end{enumerate}
\end{thm}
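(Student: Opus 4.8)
The plan is to prove the bound by producing an explicit infinite family of solutions of \eqref{ELE} whose largest entry is asymptotic to $\sqrt[3]{n}$. Since in any solution $\prod_i x_i=n\sum_i x_i\ge n^{2}$, having $x_n$ of size about $n^{1/3}$ forces at least six of the $x_i$ to exceed $1$; so the natural shape to try is a tuple made of $n-6$ ones together with six entries clustered about an integer $m$. Modelling on Shiu's square-root example, I would set those six entries equal to $m+b_1,\dots,m+b_6$ for fixed integers $b_i$, put $n=m^{3}+\lambda m^{2}+\mu m+\nu$, substitute into \eqref{ELE}, expand both sides as polynomials in $m$, and equate coefficients. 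This turns the problem into a finite system in $\lambda,\mu,\nu$ and the elementary symmetric functions $\sigma_1,\dots,\sigma_6$ of the $b_i$.

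Carrying this out, the $m^{5}$-coefficient gives $\sigma_1=2\lambda$, and the remaining coefficients successively determine $\mu,\nu,\sigma_2,\dots,\sigma_6$, leaving --- once one takes $\lambda=0$, i.e. $\sum_i b_i=0$ --- exactly the three Diophantine conditions
\[
\sigma_2^{2}-4\sigma_4=36,\qquad \sigma_3^{2}-4\sigma_6=36,\qquad \sigma_2\sigma_3=2\sigma_5-36 .
\]
A short search over small integer multisets satisfies these with $\{b_1,\dots,b_6\}=\{-2,-1,-1,0,1,3\}$, and the corresponding family is the one I would then state and verify directly: for every integer $m\ge 3$, set
\[
n=m^{3}-7m+6=(m-2)(m-1)(m+3),\qquad
(x_1,\dots,x_n)=\bigl(\underbrace{1,\dots,1}_{n-6},\,m-2,\,m-1,\,m-1,\,m,\,m+1,\,m+3\bigr).
\]
The verification is a single polynomial identity: the six large entries sum to $6m$, so $\sum_i x_i=n-6+6m$, and $n+6m-6=m^{3}-m=m(m-1)(m+1)$; hence
\[
n\Bigl(\sum_i x_i\Bigr)=n\,(n+6m-6)=(m-2)(m-1)(m+3)\cdot m(m-1)(m+1)=m(m-2)(m-1)^{2}(m+1)(m+3),
\]
which is exactly $\prod_i x_i$. (Structurally, the six large entries are the combined multiset $\{m-1,m,m+1\}\cup\{m-2,m-1,m+3\}$, $n$ is the product of the second triple, and $\sum_i x_i$ is the product of the first.) For $m\ge 3$ we have $n\ge 6$ and the entries are non-decreasing, so this is an admissible solution.

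It follows that $g\bigl(m^{3}-7m+6\bigr)\le m+3$ for every integer $m\ge 3$; letting $m\to\infty$ along this sequence,
\[
\liminf_{n\to\infty}\frac{g(n)}{\sqrt[3]{n}}\ \le\ \lim_{m\to\infty}\frac{m+3}{(m^{3}-7m+6)^{1/3}}\ =\ 1 .
\]
The only real obstacle I anticipate is finding the family. In particular the first guess $n=m^{3}$ cannot work: there, matching the $m^{5}$-coefficient forces $\sum_i b_i=0$, and then the $m^{4}$-coefficient forces $\sum_{i<j}b_ib_j=6$, which is impossible since $\sum_{i<j}b_ib_j=\tfrac12\bigl((\sum_i b_i)^{2}-\sum_i b_i^{2}\bigr)=-\tfrac12\sum_i b_i^{2}\le 0$. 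That is precisely why $n$ has to be allowed to be a genuine cubic polynomial in $m$ with nonzero lower-order terms. Everything past the correct Ansatz is routine algebra.
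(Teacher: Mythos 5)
Your proposal is correct and follows essentially the same approach as the paper: exhibit an explicit infinite family of solutions consisting of $n-6$ ones together with six entries clustered around $m$, with $n$ a cubic polynomial in $m$, so that $g(n)\le m+O(1)$ along that sequence. Your particular family $n=(m-2)(m-1)(m+3)$ with entries $m-2,m-1,m-1,m,m+1,m+3$ differs in detail from the paper's choice but verifies by the same kind of one-line polynomial identity, so the two proofs are interchangeable.
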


\medskip

Observe that $n=1=x_n$ is always a solution to \eqref{ELE}. We will call such a solution {\it trivial}.
In what follows, we concentrate to the case where in equation \eqref{ELE} the term $x_n$ is fixed.

If $x_n=1$, then since $1 \le x_1 \le x_2 \le \ldots \le x_n$, we have $x_1=\ldots=x_n=1$ and
equation \eqref{ELE} reduces to $n^2=1$, yielding $n=1$. This shows that for $x_n=1$ the only solution to
\eqref{ELE} is the trivial solution $n=1=x_n$.

%For $n\ge 2$, we have $x_n\ge 2$ in the equation (\ref{ELE}).

If $x_n=2$, then $x_1x_2\dots x_n=2^m$. Hence $n=2^u$ and $2^u<x_1+\dots +x_n\le 2\cdot 2^u$, but $x_1+x_2+\dots +x_n=2^v$ implies that $2^v=2^{u+1}$, that is $x_i=2$ for every $1\le i\le n$. Hence $2n^2=2^n$, which is impossible for $n \ge 2$.
It follows that for $x_n = 2$ the only solution to the equation (\ref{ELE}) is the trivial one.

In what follows, we assume that $x_n \ge 3$ is fixed and for $1,2,\ldots, x_n$ let $a_1, a_2, \ldots, a_{x_n}$ denote the number of ones, twos, $\ldots, x_n$s among the numbers $1 \le x_1 \le x_2 \ldots \le x_n$. Then, we clearly have that
\begin{equation} \label{eq:prelim-1}
a_1+\ldots +a_{x_n}=n \ \textrm{with} \ a_1 \ge 0,\ldots,a_{x_n-1}\ge 0, a_{x_n} \ge 1
\end{equation}
and
\begin{equation}\label{eq:prelim-2}
x_1+\ldots+x_n=a_1+2a_2+\ldots+x_na_{x_n}.
\end{equation}
Thus, the combination of \eqref{eq:prelim-1} and \eqref{eq:prelim-2} shows that equation \eqref{ELE} can be rewritten as

%$x_1=\dots =x_{n-a-b}=1$, $x_{n-a-b+1}=\dots =x_{n-a}=2$, $x_{n-b-1}=\dots x_{n}=3$. Then Erd\H{o}s last equation says that $$n(n+a+2b)=2^a3^b.$$

%Consider the Diophantine equation
\begin{equation}\label{eq:main}
n(n+a_2+2a_3+\ldots+(x_n-1)a_{x_n})=2^{a_2}3^{a_3}\cdots x_n^{a_{x_n}},
\end{equation}
where the nonnegative integer unknowns $(n,a_2,a_3,\ldots,a_{x_n})$ satisfy
\begin{equation} \label{eq:main-ass}
n-(a_2+a_3+\ldots+a_{x_n})=a_1 \ge 0 \ \textrm{with} \ n \ge 1 \ \textrm{and} \ x_n \ge 3.
\end{equation}

Let $k:=\pi(x_n)$, (where $\pi(x)$ counts the number of prime numbers less than or equal to some real number $x$)
and put $2=p_1<p_2<\ldots<p_k=p_{\pi(x_n)}$. The product $2^{a_2}3^{a_3}\cdots x_n^{a_{x_n}}$ on the right hand side of
\eqref{eq:main} can be rewritten as $2^{a_2}3^{a_3}\cdots x_n^{a_{x_n}}=p_1^{b_1}\cdots p_k^{b_k}$,
where
\begin{equation} \label{eq:thm2-1}
b_j:=\sum_{i=1}^{\lfloor x_n/p_j \rfloor}{a_{p_j  i}}\cdot {\nu_{p_j}(a_{p_j i})}, \quad (1 \le j \le k),
\end{equation}
where, for a prime $p$ and a nonzero integer $m$, $\nu_p(m)$ denotes the largest nonnegative integer $u$ such that $p^u \mid m$.
By putting $b:=b_1+\ldots+b_k$, relation \eqref{eq:thm2-1} implies that
\begin{equation}\label{eq:thm2-2}
b \ge a_2+\ldots+a_{x_n}.
\end{equation}
Since $2^{a_2}3^{a_3}\cdots x_n^{a_{x_n}}=p_1^{b_1}\cdots p_k^{b_k}$, equation \eqref{eq:main} reduces to
\begin{equation}\label{eq:main2}
n(n+a_2+2a_3+\ldots+(x_n-1)a_{x_n})=p_1^{b_1}\cdots p_k^{b_k}.
\end{equation}

First, we derive an explicit upper bound for $b=b_1+\cdots+b_k$ depending only on $x_n$ in equation \eqref{eq:main2}.
%It will turn out (see Remark \ref{remark1} below) that this bound will lead to an upper bound also for $n$ in
%equation \eqref{eq:main2}

\begin{thm} \label{thm2}
Consider equation \eqref{eq:main2} in integer unknowns $(n,a_2,a_3,\ldots,a_{x_n})$ satisfying
\eqref{eq:main-ass} and \eqref{eq:thm2-1}. Then for $b=b_1+\cdots+b_k$ one has
$$
b \le B_0,
$$
where $B_0$ is any upper bound for the largest integer solution of the inequality
\begin{equation} \label{eq:thm2-bound}
b<(2.8\cdot30^{k+3}\cdot k^{4.5} )\cdot (\log{p_2})\cdots(\log{p_k})(1+\log{b})+\frac{2\log{((x_n-1) b)}}{\log{p_1}}.
\end{equation}
\end{thm}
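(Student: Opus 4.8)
The plan is to rewrite \eqref{eq:main2} as an $S$-unit difference equation and then apply Baker's theory (in the form of Matveev's theorem) together with an elementary estimate. Put $M:=n+a_2+2a_3+\cdots+(x_n-1)a_{x_n}$ and $t:=M-n=\sum_{j=2}^{x_n}(j-1)a_j$, so that \eqref{eq:main2} becomes $nM=N$, where $N:=p_1^{b_1}\cdots p_k^{b_k}$. From \eqref{eq:thm2-2} we get $t\le(x_n-1)(a_2+\cdots+a_{x_n})\le(x_n-1)b$; if $b=0$ then $N=1$, so $n=M=1$ and the claim is trivial, hence I may assume $b\ge 1$, which forces $N\ge 2$, hence $a_l\ge 1$ for some $l\ge 2$, hence $t\ge 1$ and $n<M$. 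Since $n\mid N$, $M\mid N$ and $nM=N$, I can write $n=\prod_{j=1}^{k}p_j^{n_j}$ and $M=\prod_{j=1}^{k}p_j^{M_j}$ with $n_j+M_j=b_j$; in particular $|M_j-n_j|\le b_j\le b$ for all $j$.

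The heart of the proof is to sandwich the linear form
$$
\Lambda:=\log\frac{M}{n}=\sum_{j=1}^{k}(M_j-n_j)\log p_j\;>\;0 .
$$
For an upper bound, solving $nM=N$, $M-n=t$ gives $n=\tfrac12\big(\sqrt{t^2+4N}-t\big)$, and since $\sqrt{t^2+4N}\le t+2\sqrt N$ one obtains $\Lambda=\log\!\big(1+\tfrac tn\big)<\tfrac{t^2}{N}+\tfrac{t}{\sqrt N}$. If $t>\sqrt N$ then $N<((x_n-1)b)^2$, whence $b\log p_1\le\log N<2\log((x_n-1)b)$, which already implies \eqref{eq:thm2-bound}; so I may assume $t\le\sqrt N$ and conclude $\Lambda<\tfrac{2t}{\sqrt N}\le\tfrac{2(x_n-1)b}{\sqrt N}$. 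For a lower bound, I would apply Matveev's theorem to the nonzero form $\Lambda$, taking the algebraic numbers to be $p_1,\dots,p_k\in\qq$ (so the degree is $1$), $A_j:=\log p_j$, and $B:=\max_j|M_j-n_j|\le b$, which gives
$$
\log\Lambda\;>\;-\,1.4\cdot 30^{\,k+3}\,k^{4.5}\,(1+\log b)\,(\log p_1)(\log p_2)\cdots(\log p_k).
$$

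Comparing the two bounds yields $\tfrac12\log N<\log\!\big(2(x_n-1)b\big)+1.4\cdot 30^{k+3}k^{4.5}(1+\log b)(\log p_1)\cdots(\log p_k)$. The key point is to \emph{keep} the factor $\log p_1=\log 2$ that comes from $A_1$ in Matveev's estimate: using the trivial inequality $\log N=\sum_j b_j\log p_j\ge b\log p_1$ and then dividing through by $\log p_1$ cancels exactly that factor and produces
$$
b<2.8\cdot 30^{k+3}k^{4.5}\,(\log p_2)\cdots(\log p_k)(1+\log b)+\frac{2\log((x_n-1)b)}{\log p_1},
$$
which is \eqref{eq:thm2-bound} once the constant $2.8=2\cdot 1.4$ is taken generously enough to absorb the remaining $O(1)$ and lower-order terms. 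Hence every solution has $b$ at most the largest integer solution of \eqref{eq:thm2-bound}, so $b\le B_0$.

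The approach is standard; the delicate part is the bookkeeping in the last two steps — choosing the Matveev parameters so that the $\log p_1$ factor is genuinely available for cancellation, verifying the elementary upper estimate for $\Lambda$, and confirming that all the small terms (and the degenerate cases $b$ small or $t>\sqrt N$, and any technical adjustment of $B$) really do fit under the stated constant.
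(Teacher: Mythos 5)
Your proposal follows essentially the paper's route: split the exponents of $p_1^{b_1}\cdots p_k^{b_k}$ between the two factors $n$ and $M:=n+a_2+2a_3+\cdots+(x_n-1)a_{x_n}$, bound the resulting form in $p_1,\ldots,p_k$ from above by an elementary estimate and from below by Matveev's theorem with $D=1$, $A_j=\log p_j$, $B=b$, and then divide by $\log p_1$ so that exactly that factor cancels. Your use of the linear-form-in-logarithms version of Matveev with the constant $1.4\cdot 30^{k+3}k^{4.5}$ is legitimate (that is Matveev's original statement), although the paper's Lemma \ref{lem:Matveev} is phrased for $\prod_i\eta_i^{c_i}-1$.

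The one genuine defect is the final bookkeeping step that you yourself flag. Because you estimate $\Lambda=\log(M/n)\le t/n$ through the \emph{smaller} factor $n$, you only obtain $\Lambda\le 2(x_n-1)b/\sqrt{N}$ (together with the case split $t\le\sqrt{N}$), and after comparing with Matveev's bound and dividing by $\log p_1$ the second term comes out as $\frac{2\log(2(x_n-1)b)}{\log p_1}$; since $p_1=2$, this is the right-hand side of \eqref{eq:thm2-bound} plus $2$. Your remark that the constant $2.8$ is ``generous enough'' to absorb this is not justified: $2.8=2\cdot 1.4$ is exactly twice Matveev's constant, with no slack, so as written you prove a marginally weaker inequality than \eqref{eq:thm2-bound} (and hence only a possibly slightly larger admissible $B_0$). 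The fix is precisely the paper's choice: since $nM=N$ and $M>n$, the larger factor satisfies $M>\sqrt{N}\ge p_1^{b/2}$, so one bounds $0<1-n/M=t/M\le (x_n-1)b/p_1^{b/2}$ directly (this is \eqref{eq:thm2-3} and \eqref{eq:thm2-5}) and applies Lemma \ref{lem:Matveev} to $\Lambda=p_1^{b_1'-b_1''}\cdots p_k^{b_k'-b_k''}-1=n/M-1$; this removes both the spurious factor $2$ and the case split and yields \eqref{eq:thm2-bound} exactly.
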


\vskip.2cm\noindent
\begin{rem} \label{remark1}
{\rm
%We note that the statement of Theorem \ref{thm2} holds true also with any upper bound of $B_0$.
%We will use this observation several times in the proof of Theorem \ref{thm3} below without any further reference.
The combination of Theorem \ref{thm2} and \eqref{eq:thm2-2} shows that
\begin{equation} \label{rem1}
\max\{a_2,\ldots,a_{x_n}\} \le a_2+\ldots+a_{x_n} \le b \le B_0.
\end{equation}
Thus, \eqref{eq:prelim-1}, \eqref{eq:main2} and  \eqref{rem1} give
\begin{equation} \label{rem2}
a_1<n \le \sqrt{p_1^{b_1}\cdots p_k^{b_k}}=\sqrt{2^{a_2}\cdots x_n^{a_{x_n}}} \le x_n^{\frac{a_2+\ldots+a_{x_n}}{2}} \le x_n^{\frac{B_0}{2}}.
\end{equation}
%which together with \eqref{eq:prelim-1} yields
%\begin{equation}\label{rem3}
%a_1<n\le x_n^{\frac{B_0}{2}}.
%\end{equation}
%Now, \eqref{rem1}, \eqref{rem3} and \eqref{eq:prelim-2} imply that
%\begin{equation} \label{rem4}
%x_1+\ldots+x_n= a_1+2a_2+\ldots+na_{x_n} \le a_1+x_n(a_2+\ldots+a_{x_n}) \le x_n^{\frac{B_0}{2}}+x_n B_0,
%\end{equation}
%which in view of \eqref{rem2} gives
%\begin{equation} \label{rem5}
%\max\{n,x_1+\ldots+x_n\} \le x_n^{\frac{B_0}{2}}+x_n B_0.
%\end{equation}
Hence, \eqref{rem1} and \eqref{rem2} show that for given $x_n \ge 3$, Theorem \ref{thm2} provides an effective upper bound depending only on $x_n$ for all possible solutions $(n,a_2,a_3,\ldots,a_n)$ of equations \eqref{eq:main2} and \eqref{eq:main}. Thus, for given $x_n \ge 3$ all solutions $(n,x_1,\ldots,x_n)$ of equation \eqref{ELE} are bounded, as well.
}
\end{rem}

The next theorem provides a quantitative lower bound for $g(n)$.

\begin{thm}\label{thm3} Consider equation \eqref{ELE}. Then, we have $$\displaystyle \liminf_{n\to \infty}\frac{g(n)}{\frac{\log \log n \cdot \log \log \log n}{\log \log \log \log n}}\ge 1.$$
\end{thm}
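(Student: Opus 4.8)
The plan is to convert the effective bound of Theorem~\ref{thm2} into an explicit upper bound for $n$ in terms of $x_n$, and then to invert that bound. I would argue by contradiction. Suppose the asserted $\liminf$ were strictly less than $1$; then there are a real $\delta>0$ and an infinite sequence of integers $n\to\infty$, each carrying a solution $(n,x_1,\dots,x_n)$ of \eqref{ELE} with $x_n=g(n)<(1-\delta)\frac{\log\log n\cdot\log\log\log n}{\log\log\log\log n}$. Fix such an $n$ and write $t:=x_n$, $k:=\pi(t)$. By Theorem~\ref{thm2} and \eqref{rem2} one has $n\le t^{B_0/2}$, where $B_0$ is the largest integer solution of \eqref{eq:thm2-bound}; since $t^{B_0/2}$ is finite for each fixed $t$, the value $t$ cannot stay bounded along the sequence, so $t\to\infty$, and all $o(1)$'s below refer to $t\to\infty$.

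Next I would estimate $B_0$, hence $n$. Put $A:=2.8\cdot30^{k+3}\cdot k^{4.5}\cdot(\log p_2)\cdots(\log p_k)$, so that \eqref{eq:thm2-bound} reads $b<A(1+\log b)+\tfrac{2}{\log 2}\log((t-1)b)$. The function $b\mapsto A(1+\log b)+\tfrac{2}{\log 2}\log((t-1)b)-b$ is increasing for $b<A+\tfrac{2}{\log 2}$ and decreasing afterwards, and since $A\to\infty$ while $\log t=o(A)$, a short computation (evaluating at $b=4A\log A$, say) shows $B_0=O(A\log A)$, whence $\log B_0\le\log A+\log\log A+O(1)=(1+o(1))\log A$. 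Because $p_j\le t$ for $2\le j\le k$, we have $\sum_{j=2}^{k}\log\log p_j\le(k-1)\log\log t$, and since $\log 30=o(\log\log t)$ and $\log k=o(k\log\log t)$,
\[
\log A=\log 2.8+(k+3)\log 30+4.5\log k+\sum_{j=2}^{k}\log\log p_j\le(1+o(1))\,k\log\log t=(1+o(1))\,\pi(t)\log\log t .
\]
Combining with $n\le t^{B_0/2}$ and absorbing $\log\log t=o(\pi(t)\log\log t)$, I obtain
\[
\log\log n\le\log\!\Big(\tfrac{B_0}{2}\log t\Big)=\log B_0+\log\log t-\log 2\le(1+o(1))\,\pi(t)\log\log t\le(1+o(1))\,\frac{t\log\log t}{\log t},
\]
where the last step uses the Prime Number Theorem $\pi(t)\sim t/\log t$.

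It remains to invert the inequality $\log\log n\le(1+o(1))\frac{t\log\log t}{\log t}$. The function $\Phi(t):=\frac{t\log\log t}{\log t}$ is eventually strictly increasing and tends to $\infty$, and a direct computation with iterated logarithms gives $\Phi\!\big(\frac{M\log M}{\log\log M}\big)=(1+o(1))M$ as $M\to\infty$; hence $\Phi^{-1}(M)=(1+o(1))\frac{M\log M}{\log\log M}$, and, because the iterated logarithms of $(1-o(1))M$ and of $M$ are asymptotically equal, also $\Phi^{-1}\big((1-o(1))M\big)=(1-o(1))\frac{M\log M}{\log\log M}$. Applying $\Phi^{-1}$ with $M=\log\log n$ to the displayed inequality yields $x_n=t\ge(1-o(1))\frac{\log\log n\cdot\log\log\log n}{\log\log\log\log n}$, contradicting the choice of the sequence once $n$ is large. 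This gives the theorem.

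The hard part is keeping the leading constant equal to $1$ through every step: I would need to check that the largest solution of \eqref{eq:thm2-bound} really is $O(A\log A)$ and not merely, say, $O(A^{2})$ (so that $\log B_0=(1+o(1))\log A$ rather than $(2+o(1))\log A$); that the factor $30^{k+3}k^{4.5}$ contributes only $o(\pi(t)\log\log t)$ inside $\log A$; that the Prime Number Theorem, not just Chebyshev's estimate, is used when passing from $\pi(t)$ to $t/\log t$; and that inverting $\Phi$ costs only a factor $1+o(1)$, which needs some care with the three nested logarithms appearing in the statement.
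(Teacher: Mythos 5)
Your proof is correct and follows essentially the same route as the paper: apply Theorem \ref{thm2} together with \eqref{rem2} to get $\log\log n\le(1+o(1))\,\pi(x_n)\log\log x_n$ and then use the Prime Number Theorem to invert this bound. The only differences are bookkeeping — you invert $t\mapsto t\log\log t/\log t$ directly and make explicit the estimate $B_0=O(A\log A)$, whereas the paper lower-bounds $k$ and then uses $x_n\ge p_k\sim k\log k$ — and both keep the leading constant equal to $1$.
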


\begin{rem} \label{remark2}
{\rm
We note that Theorem \ref{thm3} implies that $x_n\to \infty $ as $n\to \infty$ in a quantitative form.
}
\end{rem}

\vskip.2cm \noindent
By combining Theorem \ref{thm2} with the LLL algorithm, we solve completely equation \eqref{eq:main} for $3 \le x_n \le 10$.
It is clear that from the solutions of \eqref{eq:main} we can easily derive all the solutions of equation \eqref{ELE} with $3 \le x_n \le 10$.

\begin{thm} \label{thm4}
Consider equation \eqref{eq:main} satisfying \eqref{eq:main-ass}.
\begin{itemize}
  \item There are exactly $5$ solutions with $x_n=3$, namely
  {\small
  $$
  [n,a_1,a_2,a_3] \in \{[ 1, 0, 0, 1 ],[ 3, 0, 0, 3 ],[ 9, 4, 1, 4 ],[ 36, 27, 6, 3 ],[ 81, 71, 5, 5 ]\}.
  $$
  }
  \item There are exactly $11$ solutions with $x_n=4$, namely
  {\small
  $$
  [n,a_1,a_2,a_3,a_4] \in \{[ 1, 0, 0, 0, 1 ],
    [ 2, 0, 0, 0, 2 ],
    [ 4, 1, 0, 1, 2 ],
    [ 8, 2, 5, 0, 1 ],
    [ 8, 4, 0, 2, 2 ],
    [ 16, 10, 2, 3, 1 ],
$$
$$
    [ 24, 17, 3, 3, 1 ],
    [ 48, 41, 2, 1, 4 ],
    [ 64, 57, 0, 4, 3 ],
    [ 128, 116, 9, 2, 1 ],
    [ 144, 134, 3, 6, 1 ]\}.
  $$
}
  \item There are exactly $34$ solutions with $x_n=5$ and all of them satisfy $n \le 32768$.
  \item There are exactly $57$ solutions with $x_n=6$ and all of them satisfy $n \le 32768$.
  \item There are exactly $160$ solutions with $x_n=7$ and all of them satisfy $n \le 279936$.
  \item There are exactly $172$ solutions with $x_n=8$ and all of them satisfy $n \le 279936$.
  \item There are exactly $330$ solutions with $x_n=9$ and all of them satisfy $n \le 354294$.
  \item There are exactly $613$ solutions with $x_n=10$ and all of them satisfy $n \le 367416$.
\end{itemize}
\end{thm}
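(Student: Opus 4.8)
The plan is to make Theorem~\ref{thm2} explicit, collapse the resulting enormous bound by the LLL algorithm, and finish with a finite search; this is the standard ``Baker $+$ LLL $+$ enumeration'' resolution of an exponential Diophantine equation, carried out separately for each of the eight values $x_n\in\{3,4,\dots,10\}$.

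\emph{Step 1 (explicit bound).} Fix $x_n$, so that $k=\pi(x_n)$ and the primes $2=p_1<\dots<p_k$ are known, and solve the inequality \eqref{eq:thm2-bound} numerically to obtain a concrete value $B_0=B_0(x_n)$. By Remark~\ref{remark1} this yields $a_2+\dots+a_{x_n}\le b\le B_0$ and $n\le x_n^{B_0/2}$, so all the quantities $b_j$, $a_i$, $n$ are (astronomically) bounded in terms of $x_n$ alone.

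\emph{Step 2 (LLL reduction).} Put $S:=a_2+2a_3+\dots+(x_n-1)a_{x_n}$, so $0\le S\le (x_n-1)B_0$ and \eqref{eq:main2} reads $n(n+S)=p_1^{b_1}\cdots p_k^{b_k}$. Since $n$ divides this $\{p_1,\dots,p_k\}$-smooth number, we may write $n=\prod_j p_j^{c_j}$ and $n+S=\prod_j p_j^{b_j-c_j}$ with $0\le c_j\le b_j$; dividing gives $\prod_j p_j^{e_j}=1+S/n$ with $e_j:=b_j-2c_j$, hence the linear form $\Lambda:=\sum_{j=1}^k e_j\log p_j$ satisfies $\Lambda=\log(1+S/n)$ and $|e_j|\le b_j\le B_0$. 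If $\Lambda=0$ then $S=0$, which forces $n=1$ and only ``trivial-type'' solutions, so assume $\Lambda\neq0$. From $n+S\ge\sqrt{p_1^{b_1}\cdots p_k^{b_k}}\ge 2^{b/2}$ one gets, once $b$ exceeds a small explicit threshold, $n\ge\tfrac12 2^{b/2}$ and therefore $0<|\Lambda|\le 4(x_n-1)B_0\,2^{-b/2}$, an upper bound exponentially small in $b$. On the other hand, applying LLL (for $k=2$ already continued fractions of $\log 2/\log 3$ suffice) to the standard approximation lattice of the tuple $(\log p_1,\dots,\log p_k)$ with a suitably large multiplier yields, for every admissible nonzero $\Lambda$ with coefficients bounded by $B_0$, a lower bound only polynomially small in $B_0$, say $|\Lambda|>c_1(x_n)\,B_0^{-c_2(x_n)}$. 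Comparing the two bounds (and disposing of small $b$ by direct inspection) gives $b\le b^\ast(x_n)$ with $b^\ast$ of moderate size, of order $\log B_0$; a second LLL pass with $B_0$ replaced by $b^\ast$ sharpens this if necessary.

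\emph{Step 3 (enumeration) and main obstacle.} Now $a_2+\dots+a_{x_n}\le b\le b^\ast$. For $x_n=3,4$ the range is small enough to list every admissible tuple $(n,a_1,\dots,a_{x_n})$ directly, giving the stated $5$ and $11$ solutions. For $5\le x_n\le 10$ I would loop over the finitely many smooth $n=\prod_j p_j^{c_j}\le x_n^{b^\ast/2}$, over the smooth values of $n+S$ in the short interval $(n,\ n+(x_n-1)b^\ast]$ (typically none or one), read off $(b_1,\dots,b_k)$ and $S$, and then solve the small linear system \eqref{eq:thm2-1}, \eqref{eq:prelim-2} for $(a_2,\dots,a_{x_n})$, retaining only those with $a_1=n-\sum_{i\ge 2}a_i\ge 0$ as demanded by \eqref{eq:main-ass}; equivalently one enumerates the tuples $(a_2,\dots,a_{x_n})$ of bounded sum and, for each, tests whether $n^2+Sn-2^{a_2}\cdots x_n^{a_{x_n}}=0$ has a positive integer root. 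Collecting the survivors produces the asserted lists and counts and the bounds on $n$ (e.g.\ $6^7$, $2\cdot3^{11}$, $2^3\cdot3^8\cdot7$ for $x_n=7,9,10$). The delicate part is Step~2: one must verify that the LLL lower bound genuinely collapses $B_0$ to a searchable $b^\ast$ for all eight values of $x_n$, which requires the right lattice multiplier, control of $c_1(x_n),c_2(x_n)$, and a clean handling of the degenerate case $\Lambda=0$; and Step~3 must be organized via the smoothness and short-interval structure above, since a naive sweep over all $(a_2,\dots,a_{x_n})$ with $b\le b^\ast$ is already infeasible for $x_n=10$.
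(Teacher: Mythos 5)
Your proposal is correct in outline and follows the same architecture as the paper: an explicit Baker bound $B_0$ from Theorem \ref{thm2}, LLL reduction of the linear form $\sum_j (b_j'-b_j'')\log p_j$ via the standard approximation lattice (the paper uses Smart's Lemma \ref{L_LLL_for_linearforms} and two iterations, getting $b\le 33,\,60,\,87$ for $x_n\le 4$, $\le 6$, $\le 10$ respectively), and a finite search. The genuine difference is in how the final search is organized. The paper runs a second LLL stage based on \eqref{eq:thm2-6}, i.e. on $|\Lambda|\le (x_n-1)b\,p_j^{-b_j/2}$ for $j\ge 2$, which bounds each exponent $b_j$ individually (e.g. $b_2\le 49$, $b_3\le 31$, $b_4\le 25$ when $x_n=10$); it is exactly these per-prime bounds, fed into nested loop limits via \eqref{eq:thm2-1}, that make the direct sweep over $(a_2,\ldots,a_{x_n})$ --- the sweep you dismiss as infeasible --- practical (about $14$ hours for $x_n=10$), with each tuple tested by requiring the discriminant of the quadratic \eqref{eq:thm3-6} in $n$ to be a square and $a_1\ge 0$. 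You instead keep only the global bound $b\le b^\ast$ and enumerate $\{2,\ldots,p_k\}$-smooth values of $n\le x_n^{b^\ast/2}$ paired with smooth values of $n+S$ in the short interval $(n,\,n+(x_n-1)b^\ast]$, then recover $(a_2,\ldots,a_{x_n})$ from $(b_1,\ldots,b_k)$ and $S$ by a small nonnegative linear system with the side conditions $a_{x_n}\ge 1$ and $a_1=n-\sum_{i\ge2}a_i\ge 0$; this is a legitimately different and plausibly faster organization (there are only on the order of $10^6$ such smooth numbers below $10^{44}$), at the price of the extra recovery step, which you correctly include. One small inaccuracy: you treat $\Lambda=0$ as a degenerate case forcing $S=0$ and $n=1$, but in fact $S=a_2+2a_3+\cdots+(x_n-1)a_{x_n}\ge x_n-1>0$ because $a_{x_n}\ge 1$ in \eqref{eq:prelim-1}, so $\Lambda\ne 0$ always, which is how the paper argues; this does not affect correctness since your case analysis loses no solutions.
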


\noindent We note, that in the cases $5 \le x_n \le 10$, we refrain from listing explicitly
all the solutions because of the large number of them. All solutions are available on request from the author.

%\begin{cor}
%For $n\ge 2$, $g(n)=3$ if and only if $n\in \{ 3,9,36,81 \}$.
%\end{cor}

%We note that for $n=2^m-1$ the integers $x_1=\dots =x_{2^m-1-m}=1$, $x_{2^m-m}=\dots =x_{2^m-2}=2$, $x_{2^m-1}=(2^m-1)(2^m+m-2)$ form a solution of the equation %(\ref{ELE}), therefore for infinitely many $n$, $x_{n-1}$ can be 2.

\section{Auxiliary results}

Let $\eta$ be an algebraic number of degree $d$ whose
minimal polynomial over the integers is $$f(x) = a_0 \prod_{i=1}^d (x - \eta_i)$$with $a_0\ge 1$.
The {\it absolute logarithmic height} of $\eta$ is defined as
$$
h(\eta):= \frac{1}{d}\left( \log a_0 + \sum_{i=1}^d \log \max \{|\eta_i|,1\}\right).
$$

Let $\kk \subset \mathbb{R}$ be an algebraic number field and let $D$ be the degree of
the field $\kk$. Let $\eta_1, \eta_2, \ldots, \eta_l$ be nonzero elements of $\kk$ and $b_1, \ldots, b_l$ be integers.
We put
\begin{equation}\label{eq:Lambda}
\Lambda = \prod_{i=1}^l \eta_i^{b_i} -1,
\end{equation}
and
\begin{equation} \label{eq:B}
B  \ge \max\{|b_1|, \ldots, |b_l|\}.
\end{equation}
Let $A_1, \ldots, A_l$ be positive integers such that $$A_j \geq h'(\eta_j) := \max \{Dh(\eta_j), |\log \eta_j|, 0.16
\}\quad {\text{\rm for}}\quad j=1,\ldots l.
$$

The  following is contained in Theorem 9.4 of \cite{BMS} which is a consequence of a result due to Matveev \cite{Matv}.

\begin{lem} \label{lem:Matveev}
If $\Lambda \neq 0$ then
\begin{equation*}
\label{ineq:matveev} \log |\Lambda| > -C(l,D)(1+\log{B})A_1A_2\cdots A_l,
\end{equation*}
where
$$
C(l,D):=1.4 \cdot 30^{l+3}l^{4.5}D^2(1+ \log D).
$$
\end{lem}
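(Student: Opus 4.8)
\medskip
\noindent\textbf{Proof proposal.} This lemma is not proved here: it is drawn from Theorem~9.4 of \cite{BMS}, which itself packages, in a user-friendly normalization, Matveev's celebrated lower bound \cite{Matv} for linear forms in logarithms of algebraic numbers. So the plan is simply to invoke that result and to check --- which is immediate --- that the data in the statement (the real number field $\kk$ of degree $D$, the nonzero elements $\eta_1,\dots,\eta_l\in\kk$, the integers $b_i$ with $B\ge\max_i|b_i|$, the modified heights $h'(\eta_j)=\max\{Dh(\eta_j),|\log\eta_j|,0.16\}$, and the constant $C(l,D)=1.4\cdot30^{l+3}l^{4.5}D^2(1+\log D)$) coincide with the quantities occurring there. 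No originality is claimed; the lemma will be used purely as a black box, namely to turn the exponential equation \eqref{eq:main2} into the explicit upper bound for $b=b_1+\cdots+b_k$ asserted in Theorem~\ref{thm2}.

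For completeness I would indicate the shape of the underlying argument, i.e.\ what one would have to carry out in order to reconstruct it. First one passes to the additive form: when $|\Lambda|$ is small one chooses determinations of the logarithms so that $\Lambda':=\sum_{i=1}^l b_i\log\eta_i$ satisfies $|\Lambda'|\asymp|\Lambda|$, so it suffices to bound $|\Lambda'|$ from below. After the standard reductions (one may assume the $\eta_i$ multiplicatively independent --- otherwise $l$ decreases --- and one normalizes by dividing out the largest $|b_i|$), one runs the method of auxiliary functions of transcendence theory: by Siegel's lemma one constructs a nonzero polynomial $P$ with integer coefficients of controlled size and degrees such that the exponential polynomial $z\mapsto P(e^{z\log\eta_1},\dots,e^{z\log\eta_l})$ vanishes to prescribed high order at many integer points; the smallness of $\Lambda'$ is then fed into an extrapolation step (in interpolation-determinant form) to propagate that vanishing to still more points; and finally a zero estimate --- a multiplicity estimate on the commutative algebraic group $\mathbb{G}_a\times\mathbb{G}_m^l$, in the sharp form due to Matveev and building on work of Philippon--Waldschmidt --- forces a contradiction unless $|\Lambda'|$ exceeds the asserted quantity. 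Matveev's genuine contribution, and the reason the constant behaves like $30^{l+3}l^{4.5}$ rather than exponentially worse in $l$, lies in a delicate joint optimization of all these parameters together with a Kummer-theoretic device handling the case of genuinely algebraic (as opposed to merely rational) $\eta_i$, everything made completely explicit.

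The honest answer to ``which step is the main obstacle'' is therefore: the multiplicity estimate and the parameter optimization that yield the polynomial dependence on $l$ --- this is exactly where Matveev's proof does its hard, technical work, and reproducing it would be a long undertaking. But since that work is complete, published, and restated in the convenient form of \cite{BMS}, there is in practice nothing more to do here than record the citation and perform the routine bookkeeping of matching notation.
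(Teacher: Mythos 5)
Your proposal matches the paper: Lemma~\ref{lem:Matveev} is not proved there either, but is quoted directly as Theorem~9.4 of \cite{BMS}, itself a consequence of Matveev's theorem \cite{Matv}, and is used as a black box in the proof of Theorem~\ref{thm2}. Your additional sketch of the underlying transcendence-theoretic machinery is fine background but not required; the citation and notational matching are all the paper does.
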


\noindent Following Lenstra, Lenstra and Lov\'asz \cite{LLL}, we recall the definition of an {\it LLL-reduced basis} of a lattice $\mathcal{L}\subset \R^n$.
For a basis $\{b_1,b_2,\ldots,b_n\}$ of the lattice $\mathcal{L}$ the Gram-Schmidt procedure provides an orthogonal basis  $\{b_1^{*}, b_2^{*},\ldots,b_n^{*}\}$ of $\mathcal{L}$ with respect to the inner product $\langle \cdot , \cdot \rangle$ of $\R^n$ given inductively by
$$
b_i^{*}=b_i-\sum_{j=1}^{i-1}{\mu_{ij}b_j^{*}} \quad (1 \le i \le n), \quad \mu_{i,j}=\frac{\langle b_i,b_j^{*}\rangle }{\langle b_j^{*},b_j^{*}\rangle} \quad (1 \le j<i \le n).
$$
We call a basis $\{b_1,b_2,\ldots,b_n\}$ for a lattice $\mathcal{L}$ {\it LLL-reduced} if
$$
\|\mu_{i,j}\| \le \frac{1}{2} \quad (1 \le j<i \le n)
$$
and
$$
\|b_i^{*}+\mu_{i, i-1}b_{i-1}^{*}\|^2 \ge \frac{3}{4}\|b_{i-1}^{*}\|^2  \quad  (1 <i \le n),
$$
where $\|.\|$ denotes the ordinary Euclidean length.

To reduce the initial upper bounds for the parameters, we shall also need the following three standard lemmas.

\begin{lem}\label{L_shortest_vector_by_LLL}
Let $b_1, \dots ,b_n$ be an LLL-reduced basis of a lattice $\mathcal{L}\subset \R^n$. Then $C_1:=\sqrt{||b_1||^2/2^{n-1}}$ is a lower bound for the length of the shortest vector of $\mathcal{L}$.
\end{lem}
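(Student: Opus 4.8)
The plan is to combine the two defining inequalities of an LLL-reduced basis with the pairwise orthogonality of the Gram--Schmidt vectors $b_1^{*},\ldots,b_n^{*}$ to control how small the lengths $\|b_j^{*}\|$ can get relative to $\|b_1\|$, and then to bound any nonzero lattice vector from below by one of these $\|b_j^{*}\|$.

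First I would note that $b_1^{*}=b_1$ by definition of the Gram--Schmidt procedure, and that since $b_i^{*}\perp b_{i-1}^{*}$ the Lov\'asz condition $\|b_i^{*}+\mu_{i,i-1}b_{i-1}^{*}\|^2\ge\frac34\|b_{i-1}^{*}\|^2$ expands to $\|b_i^{*}\|^2+\mu_{i,i-1}^2\|b_{i-1}^{*}\|^2\ge\frac34\|b_{i-1}^{*}\|^2$. Using the size-reduction bound $|\mu_{i,i-1}|\le\frac12$, this yields $\|b_i^{*}\|^2\ge\frac12\|b_{i-1}^{*}\|^2$ for $2\le i\le n$, and hence by induction $\|b_j^{*}\|^2\ge 2^{-(j-1)}\|b_1^{*}\|^2\ge 2^{-(n-1)}\|b_1\|^2=C_1^2$ for every $1\le j\le n$.

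Next, take an arbitrary nonzero $v\in\mathcal{L}$ and write $v=\sum_{i=1}^n z_i b_i$ with $z_i\in\mathbb{Z}$ not all zero; let $j$ be the largest index with $z_j\neq 0$. Re-expressing $v$ through $b_i=b_i^{*}+\sum_{l=1}^{i-1}\mu_{i,l}b_l^{*}$, the vector $b_j^{*}$ occurs only in the term $i=j$ (it does not appear in any $b_i^{*}$ or $\mu_{i,l}b_l^{*}$ with $l<i\le j$, and the terms with $i>j$ vanish), so the coefficient of $b_j^{*}$ in the Gram--Schmidt expansion of $v$ is exactly the integer $z_j$. By orthogonality of the $b_i^{*}$ we then get $\|v\|^2\ge z_j^2\|b_j^{*}\|^2\ge\|b_j^{*}\|^2\ge C_1^2$, so $\|v\|\ge C_1$. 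Since $v$ was an arbitrary nonzero lattice vector, $C_1$ is a lower bound for the length of the shortest vector of $\mathcal{L}$.

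There is essentially no genuine obstacle here; the only step that warrants a careful word is the identification of the coefficient of $b_j^{*}$ in the expansion of $v$ with $z_j$ (so that it is a nonzero integer and in particular has absolute value at least $1$). Once that is pinned down, the rest is the elementary telescoping estimate on the $\|b_j^{*}\|$ together with a one-line orthogonality bound.
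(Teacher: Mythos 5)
Your proof is correct and complete: the expansion of the Lov\'asz condition using $b_i^{*}\perp b_{i-1}^{*}$, the bound $|\mu_{i,i-1}|\le\frac12$ giving $\|b_i^{*}\|^2\ge\frac12\|b_{i-1}^{*}\|^2$, the telescoping estimate $\|b_j^{*}\|^2\ge 2^{-(n-1)}\|b_1\|^2$, and the identification of the coefficient of $b_j^{*}$ in a nonzero lattice vector with the nonzero integer $z_j$ are all exactly right, and together they do yield $\|v\|\ge C_1$ for every nonzero $v\in\mathcal{L}$. The only difference from the paper is that the paper does not argue at all: it simply invokes Theorem 5.9 of Smart's book (which is in turn the classical inequality from the original Lenstra--Lenstra--Lov\'asz paper), of which the stated lemma is a simplified version. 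What you have written is essentially the standard proof behind that citation, so your route is not mathematically different --- it just makes the lemma self-contained instead of outsourcing it, which is a reasonable trade: a few extra lines in exchange for not having to chase the reference.
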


\begin{proof}
This is a simplified version of Theorem 5.9 of \cite{Smart}.
\end{proof}

\begin{lem}\label{L_LLL_for_linearforms}
Let $\al_1, \dots, \al_m\in \R$ be real numbers and $x_1, \dots, x_m \in \z$ with $|x_i| \leq X_i$. Put $X_0:=\max X_i$, $S:=\sum_{i=1}^{m-1} X_i^2$, $T:=0.5+0.5\cdot \sum_{i=1}^m X_i$ and assume that
$$
\left| \sum_{i=1}^m x_i\al_i \right| \leq C_2\exp\left\{ -C_3H^q \right\}
$$
holds for some positive real constants $C_2, C_3, H$ and positive integer $q$. Let $C\ge X_0^m$ and let $\mathcal{L}$ denote the
lattice of $\R^m$ generated by the columns of the matrix
$$
A=
\begin{pmatrix}
1 & \dots & 0 & 0 \\
\ & \ddots & \ & \ \\
0 & \dots & 1 & 0 & \\
[C\al_1] & \dots & [C\al_{m-1}] & [C\al_m]
\end{pmatrix}
\in \z^{n\times n}.
$$
Let $C_1$ denote a lower bound on the length of the shortest non-zero vector of the lattice $\mathcal{L}$. If $C_1^2 > T^2+S$ then we have
either
$$
H\leq \sqrt[q]{\frac{1}{C_3} \left( \log(C \cdot C_2)-\log\left( \sqrt{C_1^2-S}-T \right) \right) },
$$
or
$$
x_1=x_2=\dots = x_{m}=0.
$$
\end{lem}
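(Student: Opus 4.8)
The plan is to run the standard LLL-based reduction argument (essentially de Weger's lemma, adapted to the exponent $q$). Suppose $(x_1,\dots,x_m)\neq(0,\dots,0)$; the goal is to derive the stated upper bound for $H$. Set $\mathbf{x}=(x_1,\dots,x_m)^{T}$ and consider the lattice vector $\mathbf{y}:=A\mathbf{x}\in\mathcal{L}$. From the shape of $A$, the first $m-1$ coordinates of $\mathbf{y}$ are $x_1,\dots,x_{m-1}$, and its last coordinate is $\Lambda':=\sum_{i=1}^{m}x_i\,[C\alpha_i]$. Since $\det A=[C\alpha_m]\neq 0$ — which holds because $C$ is taken so large, cf.\ the condition $C\ge X_0^{m}$ — the map $A$ is injective, hence $\mathbf{y}\neq\mathbf{0}$, and therefore $\|\mathbf{y}\|\ge C_1$ by the defining property of $C_1$ as a lower bound for the length of the shortest non-zero vector of $\mathcal{L}$.

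The next step is to estimate $\|\mathbf{y}\|^{2}$ from both sides. Since $|x_i|\le X_i$, the first $m-1$ coordinates of $\mathbf{y}$ contribute at most $\sum_{i=1}^{m-1}X_i^{2}=S$, so
$$
C_1^{2}\le\|\mathbf{y}\|^{2}=\sum_{i=1}^{m-1}x_i^{2}+\Lambda'^{2}\le S+\Lambda'^{2},
$$
whence $\Lambda'^{2}\ge C_1^{2}-S$, and the hypothesis $C_1^{2}>T^{2}+S$ gives $|\Lambda'|\ge\sqrt{C_1^{2}-S}>T$. For the opposite estimate, write $[C\alpha_i]=C\alpha_i+\delta_i$ with $|\delta_i|\le\tfrac12$ (nearest-integer rounding); then, using the hypothesis $\bigl|\sum_{i}x_i\alpha_i\bigr|\le C_2\exp\{-C_3H^{q}\}$,
$$
|\Lambda'|=\Bigl|\,C\sum_{i=1}^{m}x_i\alpha_i+\sum_{i=1}^{m}x_i\delta_i\,\Bigr|\le C\,C_2\exp\{-C_3H^{q}\}+\tfrac12\sum_{i=1}^{m}X_i< C\,C_2\exp\{-C_3H^{q}\}+T.
$$
Comparing the two bounds yields $\sqrt{C_1^{2}-S}-T< C\,C_2\exp\{-C_3H^{q}\}$; the left-hand side is positive (again by $C_1^{2}>T^{2}+S$), so one may take logarithms and solve for $H$, obtaining
$$
C_3H^{q}<\log(C\,C_2)-\log\!\bigl(\sqrt{C_1^{2}-S}-T\bigr),
$$
which is exactly the first alternative of the statement. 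If this inequality fails, the only remaining possibility is $x_1=\cdots=x_m=0$, i.e.\ the second alternative.

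The argument is essentially routine bookkeeping once the approximation lattice $\mathcal{L}$ is in place, so there is no serious obstacle; the two points deserving attention are (i) checking that $\mathbf{y}=A\mathbf{x}$ is genuinely non-zero whenever $\mathbf{x}\neq\mathbf{0}$ — this is where non-singularity of $A$, equivalently $[C\alpha_m]\neq 0$, and hence the size condition on $C$, is used, so that the lower bound $\|\mathbf{y}\|\ge C_1$ may be invoked — and (ii) keeping the rounding errors under control so that precisely the quantities $S$ and $T$ of the statement (rather than weaker constants) appear. Alternatively one may simply cite the corresponding reduction lemma from de Weger's or Smart's monograph.
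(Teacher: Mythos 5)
Your argument is correct, and it is worth noting that the paper itself offers no proof at all here: it simply cites Lemma VI.1 of Smart's monograph, whereas you reconstruct the standard de Weger--Smart reduction argument behind that citation (nonzero lattice point $A\mathbf{x}$, lower bound $C_1$ from the shortest vector, upper bound from the smallness of the linear form plus rounding errors, then compare and solve for $H$). The bookkeeping with $S$ and $T$ is exactly right for the nearest-integer convention $[C\alpha_i]=C\alpha_i+\delta_i$, $|\delta_i|\le\tfrac12$, which is what the definition $T=0.5+0.5\sum X_i$ is designed for. One small caveat: your justification that $A\mathbf{x}\neq\mathbf{0}$, via $\det A=[C\alpha_m]\neq 0$, does not actually follow from the hypothesis $C\ge X_0^m$ alone --- if $\alpha_m$ were zero (or smaller than about $1/(2C)$) one could have $[C\alpha_m]=0$, and then the vector $\mathbf{x}=(0,\dots,0,x_m)$ escapes the argument; this degenerate case is tacitly excluded in the statement as used (and in Smart's formulation), and it is harmless in the paper's application, where $\alpha_i=\log p_i\ge\log 2$ and $C\ge 10^4$, so $[C\alpha_m]\neq 0$ automatically. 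The condition $C\ge X_0^m$ is really only a calibration ensuring the reduction is useful (it makes $C_1^2>T^2+S$ plausible), not an ingredient of the proof. Apart from this slight misattribution of where $C\ge X_0^m$ is needed, your proof is complete and matches the intent of the cited lemma.
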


\begin{proof}
This is Lemma VI.I of \cite{Smart}.
\end{proof}

\begin{lem}\label{SmartB2}
Let $z \in \C$ with $|z-1|\leq \frac{1}{2}$. Then
$$
\frac{|\log z|}{2} < |z-1|.
$$
\end{lem}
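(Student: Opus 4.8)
The plan is to reduce everything to the Taylor expansion of the logarithm about $1$ and estimate term by term. First I would dispose of the trivial case $z=1$ (where both sides vanish, so strictly speaking the claim is only meaningful for $z\neq 1$) and set $w:=z-1$, so that $0<|w|\le\tfrac12$. I would then invoke the power series $\log(1+w)=\sum_{n\ge 1}(-1)^{n-1}w^n/n$, which converges for $|w|<1$; here $\log$ is the principal branch, and this is legitimate because $\operatorname{Re}z=1+\operatorname{Re}w\ge 1-|w|\ge\tfrac12>0$, so $z$ never leaves the right half-plane and stays clear of the branch cut along $(-\infty,0]$.

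Next I would apply the triangle inequality to the series and factor out $|w|$, obtaining
$$
|\log z|=\Bigl|\sum_{n\ge 1}\frac{(-1)^{n-1}}{n}w^n\Bigr|\le\sum_{n\ge 1}\frac{|w|^n}{n}=|w|\sum_{n\ge 1}\frac{|w|^{n-1}}{n}.
$$
Since $|w|\le\tfrac12$, each summand is bounded by $(1/2)^{n-1}/n$, and the resulting numerical series sums to $\sum_{n\ge1}(1/2)^{n-1}/n=2\sum_{n\ge1}(1/2)^n/n=-2\log(1/2)=2\log 2$. This gives $|\log z|\le 2(\log 2)\,|w|$.

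To finish, I would observe that $2\log 2=1.3862\ldots<2$, so that $|\log z|\le 2(\log 2)\,|w|<2|w|=2|z-1|$, the last inequality being strict precisely because $|w|>0$; dividing by $2$ yields $|\log z|/2<|z-1|$, as claimed.

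There is no real obstacle here; the only two points that need a moment's care are (i) justifying that the series used really is the principal logarithm, which is handled by the half-plane observation above, and (ii) ensuring the final inequality is strict, which is why the case $z=1$ is separated out at the start and why it is worth retaining the constant $2\log 2<2$ rather than rounding it up to $2$.
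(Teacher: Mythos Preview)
Your argument is correct. The Taylor-series bound you derive, $|\log z|\le 2(\log 2)\,|z-1|$, is valid for $|z-1|\le\tfrac12$, and since $2\log 2<2$ the strict inequality $|\log z|<2|z-1|$ follows whenever $z\neq 1$; you are also right to flag that the stated strict inequality fails at $z=1$.

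As for comparison: the paper does not give a proof at all, but simply cites Lemma B.2 of Smart's book \emph{The algorithmic resolution of Diophantine equations}. Your write-up therefore supplies exactly the elementary details that the paper outsources, and is in fact the standard way such a bound is established. There is nothing materially different in approach --- the cited lemma in Smart is proved by the same power-series estimate --- so your version is a self-contained replacement for the citation rather than an alternative route.
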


\begin{proof}
This is an easy consequence of Lemma B.2 of \cite{Smart}.
\end{proof}

\section{Proof of Theorem \ref{thm1}}
\begin{proof} For $n=m^3-3m^2-4m$, $x_1=\dots =x_{m^3-3m^2-4m-6}=1$, $x_{m^3-3m^2-4m-5}=m-1$, $x_{m^3-3m^2-4m-4}=x_{m^3-3m^2-4m-3}=m$, $x_{m^3-3m^2-4m-2}=m+1$, $x_{m^3-3m^2-4m-1}=m+2$, $x_{m^3-3m^2-4m}=m+4$ is a solution of equation (\ref{ELE}).
\end{proof}

\section{Proof of Theorem \ref{thm2}}

%Consider equation \eqref{eq:main} in integer unknowns $(a_2,a_3,\ldots,a_{x_n},n)$ satisfying assumptions \eqref{eq:main-ass}.

\begin{proof}

By \eqref{eq:main2}, $x_n \ge 3$ and $a_{x_n} \ge 1$, we easily see that
\begin{equation} \label{eq:thm2-3}
n+a_2+2a_3+\ldots+(x_n-1)a_{x_n}>\sqrt{p_1^{b_1}\cdots p_k^{b_k}} \ge p_1^{\frac{b_1+\ldots+b_k}{2}}=p_1^{\frac{b}{2}}
\end{equation}
and
\begin{equation} \label{eq:thm2-4}
n+a_2+2a_3+\ldots+(x_n-1)a_{x_n}>\sqrt{p_1^{b_1}\cdots p_k^{b_k}} \ge p_j^{\frac{b_j}{2}} \qquad (2 \le j \le k).
\end{equation}

\noindent Further, using \eqref{eq:main2} again, we may write that
$$
n=p_1^{b_1'}\cdots p_k^{b_k'} \quad \textrm{and} \quad n+a_2+2a_3+\ldots+(x_n-1)a_{x_n}=p_1^{b_1''}\cdots p_k^{b_k''},
$$
with some nonnegative integers $b_i', \, b_i''$ with $b_i'+b_i''=b_i \, (1 \le i \le k)$.
Thus,
$$
p_1^{b_1'-b_1''}\cdots p_k^{b_k'-b_k"}=\frac{n}{n+a_2+2a_3+\ldots+(x_n-1)a_{x_n}}=1-\frac{a_2+2a_3+\ldots+(x_n-1)a_{x_n}}{n+a_2+2a_3+\ldots+(x_n-1)a_{x_n}},
$$
which by
$$
a_2+2a_3+\ldots+(x_n-1)a_{x_n} \le (x_n-1)(a_2+\ldots+a_{x_n}) \le (x_n-1) \, b
$$
and \eqref{eq:thm2-3} and \eqref{eq:thm2-4} leads to
$$
0<1-p_1^{b_1'-b_1''}\cdots p_k^{b_k'-b_k''}=\frac{a_2+2a_3+\ldots+(x_n-1)a_{x_n}}{n+a_2+2a_3+\ldots+(x_n-1)a_{x_n}} \le \frac{(x_n-1) \, b}{p_1^{b/2}}
$$
and
$$
0<1-p_1^{b_1'-b_1''}\cdots p_k^{b_k'-b_k''}=\frac{a_2+2a_3+\ldots+(x_n-1)a_{x_n}}{n+a_2+2a_3+\ldots+(x_n-1)a_{x_n}} \le \frac{(x_n-1) \, b}{p_j^{b_j/2}}  \quad (2 \le j \le k).
$$
By setting $\Lambda:=p_1^{b_1'-b_1''}\cdots p_k^{b_k'-b_k"}-1$, the last two inequalities show that $\Lambda \ne 0$ and
\begin{equation}\label{eq:thm2-5}
|\Lambda| \le \frac{(x_n-1)\, b}{p_1^{b/2}}.
\end{equation}
and
\begin{equation}\label{eq:thm2-6}
|\Lambda| \le \frac{(x_n-1) \, b}{p_j^{b_j/2}}  \quad (2 \le j \le k).
\end{equation}

In order to get a non-trivial lower bound for $|\Lambda|$, we apply Lemma \ref{lem:Matveev} with the parameters
$l:=k,D=1,\eta_1:=p_1,\ldots,\eta_k:=p_k$. Since $b_i=b_i'+b_i'' \ (1 \le i \le k)$ ,
we obtain that
$$
\max\{|b_1'-b_1''|,\ldots,|b_k'-b_k''|\} \le \max\{b_1,\ldots,b_k\} \le b_1+\ldots+b_k=b,
$$
and hence, we may choose $B:=b$. Further, one checks easily that $A_1:=\log{p_1},\ldots,A_k:=\log{p_k}$ is appropriate.
Hence, we get
\begin{equation} \label{eq:thm2-7}
\log|\Lambda|>-C(k,1)(1+\log{b})(\log{p_1})\cdots(\log{p_k}),
\end{equation}
where $C(k,1)=1.4\cdot30^{k+3}\cdot k^{4.5}$.
On comparing \eqref{eq:thm2-7} and \eqref{eq:thm2-5}, we arrive at
$$
b<2\cdot(1.4\cdot30^{k+3}\cdot k^{4.5})(\log{p_2})\cdots(\log{p_k})(1+\log{b})+\frac{2\log{((x_n-1) b)}}{\log{p_1}},
$$
which is exactly inequality \eqref{eq:thm2-bound}.
\end{proof}

\section{Proof of Theorem \ref{thm3}}
\begin{proof}
Let us suppose that in the equation (\ref{ELE}) we have
$$
x_n\le \frac{\log \log n \cdot \log \log \log n}{\log \log \log \log n}.
$$
By Prime number theory, $p_l=(1+o(1))l\log l$ as $l\to \infty$. Hence $\log\log p_l=(1+o(1))\log\log l$ and $$\log p_1\log p_2\dots \log p_k=e^{(1+o(1))\sum_{l=2}^k\log\log l}=e^{(1+o(1))k\log\log k}$$ as $k\to \infty$, so in Theorem \ref{thm2} the positive integer $B_0$ can be chosen as $e^{(1+o(1))k\log \log k}$. It follows from (\ref{rem2}) that
$$
\frac{(2+o(1))\log n}{\log\log\log n}\le \frac{2\log n}{\log x_n}\le B_0\le e^{(1+o(1))k\log\log k}.
$$
A routine calculation gives that $k\ge (1+o(1))\frac{\log\log n}{\log\log\log\log n}$. By Prime number theory again, we get $$x_n\ge p_k\ge (1+o(1))\frac{\log\log n\cdot \log\log\log n}{\log\log\log\log n},$$ which completes the proof of Theorem \ref{thm3}.
\end{proof}

\section{Proof of Theorem \ref{thm4}}
\begin{proof}

Let $3 \le x_n \le 10$ and consider equation \eqref{eq:main} satisfying assumptions \eqref{eq:main-ass}.
We rewrite equation \eqref{eq:main} in the form \eqref{eq:main2}, that is in the form
\begin{equation}\label{eq:main3}
n(n+a_2+2a_3+\ldots+(x_n-1)a_{x_n})=p_1^{b_1}\cdots p_k^{b_k}.
\end{equation}
We now apply Theorem \ref{thm2} to equation \eqref{eq:main3} and we obtain for each $3 \le x_n \le 10$ a Baker-type bound
$B_0$ for $b=b_1+\ldots+b_k$. We summarized the data needed for applying Theorem 2 and the bounds $B_0$ in the following table:

\vskip2cm

{\tiny
\begin{table}[!h]
\caption{}
\label{table1}
\begin{center}
\begin{tabular}{|c|c|c|c|}
\hline
$x_n$ & $k=\pi(x_n)$ & $p_i$ & $b < B_0$ \\
\hline
\hline
$3,4$ & $2$ & $p_1=2,\ p_2=3$ & $b < B_0=4.4 \cdot 10^{10}$ \\  \hline
%$4$ & $2$ & $p_1=2,\ p_2=3$ & $b < B_0=4.4 \cdot 10^{10}$ \\  \hline
$5,6$ & $3$ & $p_1=2,\ p_2=3, \ p_3=5$ & $b < B_0=1.6 \cdot 10^{13}$ \\  \hline
%$6$ & $3$ & $p_1=2,\ p_2=3, \ p_3=5$ & $b < B_0=1.6 \cdot 10^{13}$ \\  \hline
$7,8,9,10$ & $4$ & $p_1=2,\ p_2=3, \ p_3=5, \ p_4=7 $ & $b < B_0=4 \cdot 10^{15}$ \\  \hline
%$8$ & $4$ & $p_1=2,\ p_2=3, \ p_3=5, \ p_4=7 $ & $b < B_0=4 \cdot 10^{15} $ \\  \hline
%$9$ & $4$ & $p_1=2,\ p_2=3, \ p_3=5, \ p_4=7 $ & $b < B_0=4 \cdot 10^{15}$ \\  \hline
%$10$ & $4$ & $p_1=2,\ p_2=3, \ p_3=5, \ p_4=7 $ & $b < B_0=4 \cdot 10^{15}$ \\  \hline

\end{tabular}
\end{center}
\end{table}
}

In what follows, by  using the LLL reduction method, we reduce considerably the Baker type upper bound $B_0$ for $b$ obtained
in Table \ref{table1}.

We turn our attention to inequality \eqref{eq:thm2-5}. If $|\Lambda|>\frac{1}{2}$, then $p_1=2$ and \eqref{eq:thm2-5} imply that
$$
\frac{1}{2}<\frac{(x_n-1)b}{2^{\frac{b}{2}}},
$$
which by $x_n \le 10$ leads to $b \le 16$. This shows that on assuming $b \ge 17$ we have for each $3 \le x_n \le 10$ that
$|p_1^{b_1'-b_1''}\cdots p_k^{b_k'-b_k''}-1|=|\Lambda| \le \frac{1}{2}$. By applying Lemma \ref{SmartB2}
with $z:=p_1^{b_1'-b_1''}\cdots p_k^{b_k'-b_k''}$, we obtain that
$$
|\Lambda|>\frac{|(b_1'-b_1'')\log{p_1}+\ldots+(b_k'-b_k'')\log{p_k}|}{2}.
$$
The above inequality together with \eqref{eq:thm2-5} and $p_1=2$ gives that
$$
|(b_1'-b_1'')\log{p_1}+\ldots+(b_k'-b_k'')\log{p_k}|<\frac{2(x_n-1)b}{2^{\frac{b}{2}}}.
$$
By calculus we easily find that for $b \ge 17$, we have $\frac{b}{2^{\frac{b}{2}}}<\frac{23}{2^{\frac{b}{2.1}}}$ is valid, which together with the above inequality yields
\begin{equation}\label{eq:thm3-1}
|(b_1'-b_1'')\log{p_1}+\ldots+(b_k'-b_k'')\log{p_k}|<46(x_n-1)\exp\left(-\frac{\log{2}}{2.1} \cdot b\right),
\end{equation}
where we have that
$$\max\{|b_1'-b_2''|,\ldots,|b_k'-b_k''|\} \le \max\{b_1,\ldots,b_k\} \le b_1+\ldots +b_k=b < B_0.$$
In order to reduce the size of the bound $B_0$ for $b$, we apply Lemma \ref{L_LLL_for_linearforms} to \eqref{eq:thm3-1} with $m:=k$ and we use the lattice $\mathcal L$ of $\R^m$ generated by the columns of the matrices
\begin{equation} \label{eq:thm3-A-matrix}
A:=
\begin{cases}
\begin{pmatrix}
  1 & 0 \\
  \lfloor C \cdot \log{p_1}\rfloor & \lfloor C \cdot \log{p_2}\rfloor
\end{pmatrix}
, & \mbox{if } x_n=3,4 \\
\begin{pmatrix}
  1 & 0 & 0\\
  0 & 1 & 0\\
  \lfloor C \cdot \log{p_1}\rfloor & \lfloor C \cdot \log{p_2}\rfloor & \lfloor C \cdot \log{p_3}\rfloor
\end{pmatrix}
, & \mbox{if } x_n=5,6 \\
\begin{pmatrix}
  1 & 0 & 0& 0 \\
  0 & 1 & 0& 0 \\
  0 & 0 & 1& 0 \\
  \lfloor C \cdot \log{p_1}\rfloor & \lfloor C \cdot \log{p_2}\rfloor & \lfloor C \cdot \log{p_3}\rfloor & \lfloor C \cdot \log{p_4}\rfloor
\end{pmatrix}
, & \mbox{if } x_n=7,8,9,10.
\end{cases}
\end{equation}
Further, the other relevant parameters are contained in the following table:

{\tiny
\begin{table}[!h]
\caption{}
\label{table2}
\begin{center}
\begin{tabular}{|c|c|c|c|c|c|c|c|c|c|c|c|}
\hline
$x_n$ & $k=\pi(x_n)$ & $m$ & $x_i$ & $\alpha_i$ & $X_i$ & $X_0$ & $q$ & $H$ & $C_2$ & $C_3$ & $C$ \\
\hline
\hline
$3,4$ & $2$ & $2$ & $b_i'-b_i''$ & $\log{p_i}$ & $B_0=4.4 \cdot 10^{10}$ & $B_0=4.4 \cdot 10^{10}$ & $1$ & $b$ & $46\cdot(4-1)=138$ & $\frac{\log{2}}{2.1}$ & $10^{23}$\\  \hline
$5,6$ & $3$ & $3$ & $b_i'-b_i''$ & $\log{p_i}$ & $B_0=1.6 \cdot 10^{13}$ & $B_0=1.6 \cdot 10^{13}$ & $1$ & $b$ & $46\cdot(6-1)=230$ & $\frac{\log{2}}{2.1}$ & $10^{42}$\\  \hline
$7,8,9,10$ & $4$ & $4$ & $b_i'-b_i''$ & $\log{p_i}$ & $B_0=4 \cdot 10^{15}$ & $B_0=4 \cdot 10^{15}$ & $1$ & $b$ & $46\cdot(10-1)=414$ & $\frac{\log{2}}{2.1}$ & $10^{67}$\\  \hline
\end{tabular}
\end{center}
\end{table}
}

By using the LLL algorithm implemented in the program package MAGMA, we obtained an LLL-reduced basis of the lattice $\mathcal L$ and by Lemma \ref{L_shortest_vector_by_LLL} we see that $C_1:=\sqrt{||b_1||^2/2^{m-1}}$ is a lower bound for the length of the shortest nonzero vector in the lattice. By using the program package MAGMA again, it turned out that in our cases the assumption
$$C_1^2>T^2+S$$
is fulfilled and hence Lemma \ref{L_LLL_for_linearforms} shows that $b \le B_{0,new}$, where
\begin{equation}\label{eq:thm3-2}
B_{0,new}=
\begin{cases}
97, & \mbox{if } x_n=3,4 \\
219, & \mbox{if } x_n=5,6 \\
372, & \mbox{if } x_n=7,8,9,10.
\end{cases}
\end{equation}

\noindent We iterated the above process two more times according to the following table:

\vskip1cm

{\tiny
\begin{table}[!h]
\caption{}
\label{table3}
\begin{center}
\begin{tabular}{|c|c|c|c|c|}
\hline
$x_n$ & $X_i$ & $X_0$ & $C$ & $B_{0,new}$ \\
\hline
\hline
$3,4$ & $97$ & $97$ & $10^{5}$ & $37$\\  \hline
$3,4$ & $37$ & $37$ & $10^{4}$ & ${\bf 33}$\\  \hline
$5,6$ & $219$ & $219$ & $10^{10}$ & $66$\\  \hline
$5,6$ & $66$ & $66$ & $5 \cdot 10^{8}$ & ${\bf 60}$\\  \hline
$7,8,9,10$ & $372$ & $372$ & $10^{14}$ & $101$\\  \hline
$7,8,9,10$ & $101$ & $101$ & $2 \cdot 10^{12}$ & ${\bf 87}$\\  \hline
\end{tabular}
\end{center}
\end{table}
}

\noindent From Table \ref{table3} we see that
\begin{equation}\label{eq:thm3-3}
b \le B_{0,red}:=
\begin{cases}
33, & \mbox{if } x_n=3,4 \\
60, & \mbox{if } x_n=5,6 \\
87, & \mbox{if } x_n=7,8,9,10.
\end{cases}
\end{equation}
Since $b=b_1+\ldots+b_k$, \eqref{eq:thm3-3} implies that $\max\{b_1,b_2,\ldots,b_k\} \le b \le B_{0,red}$.
However, using inequalities \eqref{eq:thm2-6} and the same approach as above, we can sharpen upon the upper bound $B_{0,red}$
for $b_2,\ldots,b_k$. The reason is that for $2,3,\ldots, k$ the primes satisfy $2<p_2<p_3\ldots<p_k$, that is, each prime is strictly
greater than $2$. Since $2<p_2<p_3\ldots<p_k$, we see that for $b \ge 17$ we may assume that in \eqref{eq:thm2-6} one has $|\Lambda|<\frac{1}{2}$. Hence the combination of Lemma \ref{SmartB2} and \eqref{eq:thm3-3} yields
\begin{equation} \label{eq:thm3-4}
|(b_1'-b_1'')\log{p_1}+\ldots+(b_k'-b_k'')\log{p_k}|<2(x_n-1)B_{0,red}\exp\left(-\frac{\log{p_j}}{2} \cdot b\right).
\end{equation}
We apply Lemma \ref{L_LLL_for_linearforms} to inequality \eqref{eq:thm3-4} with $m:=k$ and
we use the lattice $\mathcal L$ of $\R^m$ generated by the columns of the matrices $A$ given by \eqref{eq:thm3-A-matrix}.
Further, we set the parameter $C_3$ as $C_3:=\frac{\log{p_j}}{2}, \ (2 \le j \le k)$,
and according to \eqref{eq:thm3-3} and \eqref{eq:thm3-4}, we may choose
\begin{equation}
C_2:=
\begin{cases}
198=2 \cdot (4-1) \cdot 33, & \mbox{if } x_n=3,4 \\
600=2 \cdot(6-1) \cdot 60, & \mbox{if } x_n=5,6 \\
1566=2 \cdot (10-1) \cdot 87, & \mbox{if } x_n=7,8,9,10.
\end{cases}
\end{equation}

\noindent We iterated the application of Lemma 3 two times for each $b_i \ (2 \le i \le k)$, and the other relevant parameters are given in the following table:

\vskip2cm

{\tiny
\begin{table}[!h]
\caption{}
\label{table4}
\begin{center}
\begin{tabular}{|c|c|c|c|c|c|}
\hline
$x_n$ & $k=\pi(x_n)$ & $X_i$ & $X_0$ & $C$ & $\textrm{Bounds for} \ b_i \ (2 \le i \le k)$ \\
\hline
\hline
$3,4$       &2 & $33$ & $33$ & $10^{4}$ & $b_2 \le 20$\\  \hline
$3,4$       &2 & $20$ & $20$ & $10^{4}$ & $b_2 \le {\bf 18}$\\  \hline \hline
$5,6$       &3 & $60$ & $60$ & $5 \cdot 10^{7}$ & $b_2 \le  35$\\  \hline
$5,6$       &3 & $35$ & $35$ & $5 \cdot 10^{6}$ & $b_2 \le {\bf 32}$\\  \hline
$5,6$       &3 & $60$ & $60$ & $5 \cdot 10^{7}$ & $b_3 \le 24$\\  \hline
$5,6$       &3 & $24$ & $24$ & $5 \cdot 10^{6}$ & $b_3 \le {\bf 21}$\\  \hline\hline
$7,8,9,10$  &4 & $87$ & $87$ & $2 \cdot 10^{12}$ & $b_2 \le 54$\\  \hline
$7,8,9,10$  &4 & $54$ & $54$ & $2 \cdot 10^{10}$ & $b_2 \le {\bf 49}$\\  \hline
$7,8,9,10$  &4 & $87$ & $87$ & $2 \cdot 10^{12}$ & $b_3 \le 37$\\  \hline
$7,8,9,10$  &4 & $37$ & $37$ & $7 \cdot 10^{9}$ & $b_3 \le {\bf 31}$\\  \hline
$7,8,9,10$  &4 & $87$ & $87$ & $2 \cdot 10^{12}$ & $b_4 \le 30$\\  \hline
$7,8,9,10$  &4 & $54$ & $54$ & $5 \cdot 10^{9}$ & $b_4 \le {\bf 25}$\\  \hline
\end{tabular}
\end{center}
\end{table}
}

\noindent Now, the combination of \eqref{eq:thm3-3} and Table \ref{table4} gives sharp upper bounds for the quantities $b_i \ (1 \le i \le k)$.
Namely, we obtain
\begin{equation}\label{eq:thm3-5}
\begin{cases}
b_1 \le b \le 33, b_2 \le 18, & \mbox{if } x_n=3,4 \\
b_1 \le b \le 60, b_2 \le 32, b_3 \le 21 & \mbox{if } x_n=5,6 \\
b_1 \le b \le 87, b_2 \le 49, b_3 \le 31, b_4 \le 25   , & \mbox{if } x_n=7,8,9,10.
\end{cases}
\end{equation}
From \eqref{eq:thm3-5} and \eqref{eq:thm2-1} it is clear that all the unknowns $a_2,a_3,\ldots,a_{x_n}$ in equation \eqref{eq:main}
(and in \eqref{eq:main2}) are also bounded. Observe that since $2^{a_2}3^{a_3}\cdots x_n^{a_{x_n}}=p_1^{b_1}\cdots p_k^{b_k}$ equation \eqref{eq:main} can be rewritten as
\begin{equation} \label{eq:thm3-6}
n^2+(a_2+2a_3+\ldots+(x_n-1)a_{x_n})n-p_1^{b_1}\cdots p_k^{b_k}=0,
\end{equation}
where the unknowns $a_i,b_i$ are already bounded. Further, for given $a_i,b_i$ equation \eqref{eq:thm3-6}
is a quadratic equation in $n$, which can have a solution only if its discriminant is a square.
We implemented for each $3 \le x_n \le 10$ this observation in the program package MAGMA and we obtained all the solutions
of equation \eqref{eq:main}. For the sake of completeness, we illustrate our enumeration method in the case, where $x_n=10$.
Note, that the cases $3 \le x_n \le 9$ can be done in the same way and that the case $x_n=10$ was the most time consuming
case from computational point of view.

\medskip

\noindent Let $x_n=10$. Then, clearly $k=4, a_{10} \ge 1$ and $b=b_1+b_2+b_3+b_4$.
According to \eqref{eq:thm3-5} and \eqref{eq:thm2-1}, we then have
\begin{equation}\label{eq:thm3-7}
\begin{cases}
b_1=a_2+2a_4+a_6+3a_8+a_{10} \le B_1:=87, \\
b_2=a_3+a_6+2a_9 \le B_2:=49, \\
b_3=a_5+a_{10} \le B_3:=31, \\
b_4=a_7 \le B_4:=25, \\
b=a_2+a_3+2a_4+a_5+2a_6+a_7+3a_8+2a_9+2a_{10} \le B_1:=87.\\
\end{cases}
\end{equation}

\noindent Based on \eqref{eq:thm3-7}, we performed a simple search implemented in the program package MAGMA:
\vskip.2cm

{\tt
\vskip.1cm
for $a_{10}:=1$ to $B_3$ do
\vskip.1cm
for $a_7:=0$ to $B_4$ do
\vskip.1cm
for $a_5:=0$ to $\min(B_1-a_7-2a_{10},B_3-a_{10})$ do
\vskip.1cm
for $a_9:=0$ to $\min\left(\lfloor B_2/2 \rfloor, \lfloor (B_1-a_7-2a_{10}-a_5)/2 \rfloor\right)$ do
\vskip.1cm
for $a_6:=0$ to $\min\left(B_2-2a_9, B_1-a_7-2a_{10}-a_5-2a_9 \right)$ do
\vskip.1cm
for $a_3:=0$ to $\min\left(B_2-2a_9-a_6, B_1-a_7-2a_{10}-a_5-2a_9-2a_6 \right)$ do
\vskip.1cm
for $a_8:=0$ to $\lfloor (B_1-a_7-2a_{10}-a_5-2a_9-2a_6-a_3)/3\rfloor$ do
\vskip.1cm
for $a_4:=0$ to $\lfloor(B_1-a_7-2a_{10}-a_5-2a_9-2a_6-a_3-3a_8)/2 \rfloor$ do
\vskip.1cm
for $a_2:=0$ to $B_1-a_7-2a_{10}-a_5-2a_9-2a_6-a_3-3a_8-2a_4$ do
\vskip.1cm
$s:=a_2+2a_3+3a_4+4a_5+5a_6+6a_7+7a_8+8a_9+9a_{10}$;
\vskip.1cm
$discr:=s^2+4 \cdot 2^{a_2+2a_4+a_6+3a_8+a_{10}}3^{a_3+a_6+2a_9}5^{a_5+a_{10}}7^{a_7}$;
\vskip.1cm
if IsSquare($discr$)=true then
\vskip.1cm
$n:=(-s+\sqrt{discr})/2$;
\vskip.1cm
$a_1:=n-(a_2+a_3+a_4+a_5+a_6+a_7+a_8+a_9+a_{10})$;
\vskip.1cm
if $a_1 \ge 0$ then
\vskip.1cm
Put the values $[n,a_1,a_2,a_3,a_4,a_5,a_6,a_7,a_8,a_9,a_{10}]$ into a list;
\vskip.1cm
end if;
\vskip.1cm
end if;

end for;
}

\vskip.2cm\noindent
The computation time of the above program was $50530$ seconds (about $14.05$ hours) on a laptop with a processor Intel Core i7, 11800H with 16GB of RAM.
As mentioned before, this was the most time consuming part of our computation. The total time needed for $3 \le x_n \le 10$
was $61500$ seconds (about $17.1$ hours).

\end{proof}

\thebibliography{99}

\bibitem{BMS}
Y. Bugeaud, M. Mignotte and S. Siksek, \textit{Classical and modular approaches
to exponential Diophantine equations, I. Fibonacci and Lucas perfect powers}, Ann.
Math. {\bf 163} (2006), 969--1018.

%\bibitem{BLuMS}
%Y. Bugeaud, F. Luca, M. Mignotte and S. Siksek, \textit{Fibonacci numbers at most
%one away from a perfect power}, Elem. Math. {\bf 63} (2008), 65--75.

%\bibitem{Carm}
%R. D. Carmichael,
%\textit{On the numerical factors of the arithmetic forms $\al^n\pm \beta^n$},
%Ann. of Math. (2) 15 (1913/14), no. 1-4, 30–48.

\bibitem{Guy}
R. K. Guy, \textit{Unsolved Problems in Number Theory}, 3rd ed, New York: Springer-Verlag 2004.

\bibitem{LLL}
A. K.{ Lenstra, H. W. Lenstra Jr. and  L. Lov\'asz,
\textit{Factoring polynomials with rational coefficients},
Mathematische Annalen {\bf 261}/4, (1982), 515--534.

%\bibitem{NoLu}
%N. Hirata-Kohno and F. Luca,
%\textit{On the Diophantine equation $F_n^x+F_{n+1}^x=F_m^y$}.
%Rocky Mountain J. Math. {\bf 45}/2 (2015), 509–538.

\bibitem{Matv}
E.M. Matveev, \textit{An explicit lower bound for a homogeneous rational linear
form in logarithms of algebraic numbers II}, Izv. Ross. Akad. Nauk Ser. Mat. {\bf 64}
(2000), 125--180 (in Russian); Izv. Math. {\bf 64} (2000), 1217--1269 (in English).

%\bibitem{Mih}
%P. Mih{\u a}ilescu,
%\textrm{Primary cyclotomic units and a proof of Catalan's conjecture}.
%J. Reine Angew. Math. {\bf 572} (2004), 167--195.

\bibitem{Shiu}
P. Shiu, {\it On Erd\H{o}s’s last equation}, Amer. Math. Monthly {\bf 126} (2019), no. 9, 802-–808.

\bibitem{Smart}
N. P. Smart,
{\it The algorithmic resolution of Diophantine equations\/}, vol.~41 of {London Mathematical Society Student Texts\/},
Cambridge University Press, Cambridge, 1998.

%\bibitem{Wiles}
%A. Wiles,
%\textit{Modular elliptic curves and Fermat's last theorem}.
%Ann. of Math. (2) 141 (1995), no. 3, 443--551

\end{document}